\newtheorem{thm}{Theorem}[section]
\newtheorem{rem}[thm]   {Remark}
\newtheorem{defn}[thm]  {Definition}
\newcounter{foo}  \Alph{foo}
\newenvironment{proof}  {\par\noindent{\bf Proof}\ }
                             {\hfill$\Box$\par\medskip}
\newenvironment{example}
{\medskip\par\noindent{\sc Example}\ }
{\par}
\begin{document}

\tikzset{->-/.style={decoration={
  markings,
  mark=at position .5 with {{->}}},postaction={decorate}}}

\title{Cosheaf Theoretical Constructions in Networks and Persistent Homology}

\author{Nicholas A. Scoville, Karthik Yegnesh}

%%%%%%%

\maketitle

\begin{abstract}
{Persistent homology has recently emerged as a powerful technique in topological data analysis for analyzing the emergence and disappearance of topological features throughout a filtered space, shown via persistence diagrams. Additionally, (co)sheaves have proven to be powerful instruments in tracking locally defined data across global systems, resulting in innovative applications to network science. In this paper, we combine the topological results of persistent homology and the quantitative data tracking capabilities of cosheaf theory to develop novel techniques in network data flow analysis. Specifically, we use cosheaf theory to construct persistent homology in a framework geared towards assessing data flow stability in hierarchical recurrent networks (HRNs). We use cosheaves to link topological information about a filtered network encoded in persistence diagrams with data associated locally to the network. From this construction, we use the homology of cosheaves as a framework to study the notion of ``persistent data flow errors." That is, we generalize aspects of persistent homology to analyze the lifetime of local data flow malfunctions. %We proceed with several constructions motivated by the persistent homology of filtered topological spaces to fit our network theoretical environment.
We study an algorithmic construction of persistence diagrams parameterizing network data flow errors, thus enabling novel applications of statistical methods to study data flow malfunctions. We conclude with an application to network packet delivery systems. }
{hierarchical recurrent networks, cosheaves, data flow, persistent homology, network packet delivery systems}
%%%% If classification number provided then
\\
2000 Math Subject Classification: 05C82, 54B40, 18F20
\end{abstract}

\section{Introduction}

Applications of (co)sheaves and sheaf cohomology to network science have recently emerged in the burgeoning field of topological data analysis from the works of Ghrist \cite{GH-2011} and Curry \cite{C-13}. The motivation is that sheaves provide a link between local and global data associated to portions of a topological space, enabling one to extract useful information pertaining to information flow from sheaves on a network. A fruitful application of (co)sheaf theory is sheaf cohomology, or dually cosheaf homology. This has previously been used in network analysis with an information-theoretic incentive \cite{GH-2011} to study information flow via network coding sheaves.\\
	
Additionally, persistent homology has surfaced as a powerful computational and algebraic tool in topological data analysis to study the emergence and disappearance of topological features across a filtered topological space. Its applications and uses are wide ranging. See for example \cite{ABDGL-15, B-15, DFW} and the recent book \cite{0-2015}. A particularly useful construction in persistent homology is that of a persistence diagram, which provides a visual depiction of the lifetime of topological features in the extended real half plane.\\
	
In this paper, we consider an approach to analyzing data flow in hierarchical recurrent networks (HRNs) that combines both the topological approach of persistent homology and the quantitative information tracking abilities of cosheaves. Neural networks carrying the topological structure of a HRN have a variety of applications ranging from speech recognition \cite{SB-2009} to image classification \cite{ZSW-16}. Our main results generalize techniques in persistent homology to study the notion of "persistent" data flow malfunctions in HRNs.\\

The paper is organized as follows. Sections \ref{Preliminaries} and \ref{Data Flow Control and Errors} provide an introduction to HRNs, our main object of study which will be used to model data flow in a network. Section \ref{Cosheaf Theoretical Constructions} gives a cosheaf theoretical construction of the notions in the previous sections and develops an application of homology to study data flow errors, as well as an algorithmic construction of generalized persistence based on results from Section \ref{Data Flow Control and Errors}. We conclude in Section \ref{Complex Networks Application} with a discussion of applications to complex networks.

\section{Preliminaries}\label{Preliminaries}

In this section, we establish the terminology and notation used throughout the body of the paper.

%Let $\mathrm{Vect_k}$ denote the category of finite vector spaces over a field $k$. In section \ref{Cosheaf Theoretical Constructions}, we will consider precosheaves $\mathscr{G}\colon \mathrm{Open}(\overrightarrow{Dgm}_1)\to \mathrm{Vect}_k$, with $\overrightarrow{Dgm_1}$ being a persistence diagram parameterizing the lifetimes of directed $1$-cycles representing subprograms in a hierarchical recurrent network (see section \ref{Dgm}). We will use varying constructions in later sections but with a similar motivation.

\subsection{Hierarchical Recurrent Networks}\label{Hierarchical Recurrent Networks}

Let $\mathbb{N}=\{0,1,2,3,\ldots\}$. We will use standard graph theory language.  Our reference for the basics of graph theory is \cite{West-96}

By a directed cycle, we mean a cycle on $k\geq 3$ vertices $v_1, \ldots, v_k$ with directed edge given by $v_i\to v_{i+1}$ for all $1\leq i \leq k-1$ and $v_k\to v_1$.

\begin{defn}\label{loop}  Let $G$ be a directed path on $2m+1$ vertexes $v_1\ldots, v_{2m+1}$ with directed edge $v_{2i} \to v_{2i-1}$ and $v_{2i}\to v_{2i+1}$ for each $1\leq i\leq m$.  If $H^1, \ldots, H^m$ are any $m$ directed cycles, a \textit{Hierarchical Recurrent Network} (HRN) $H$ is constructed by identifying a directed edge in cycle $H^i$ with edge $v_{2i}\to v_{2i-1}$ such that the orientations are consistent. Each $H^i$ is called the \textit{$i^{th}$ recurrent subprogram of $H$}, or sometimes just subprogram.

%A \textit{Hierarchical Recurrent Network (HRN)} is a directed graph $G$ constructed as follows:  Let $P$ be a directed path of length $m$ with exactly one arrow on each edge.  Suppose further that there is a unique source (one of the vertices of degree $1$) and a unique sink (the other vertex of degree $1$).  For any $0\leq k\leq m$, let $C_1, \ldots, C_k$ be a collection of directed cycles with exactly one arrow on each edge, and suppose that no directed cycle has a source or a sink.  We construct $G$ by defining for each directed cycle $C_i$, a unique edge $e$ in $P$, and identifying under opposite directions $e$ with some edge of $C_i$ with the resulting edge having direction that of $C_i$.
\end{defn}

\begin{example}\label{HRN example} As an example to illustrate Definition \ref{loop}, the directed graph $H$ below is a HRN with $m=3$.

$$
\begin{tikzpicture}[
    decoration={markings,mark=at position 0.6 with {\arrow{triangle 60}}},
    scale=1.4]

\node[inner sep=1pt, circle, fill=black] (1) at (0,0) [draw] {};
\node[inner sep=1pt, circle, fill=black] (2) at (1,0) [draw] {};
\node[inner sep=1pt, circle, fill=black] (3) at (2,0) [draw] {};
\node[inner sep=1pt, circle, fill=black] (4) at (3,0) [draw] {};
\node[inner sep=1pt, circle, fill=black] (5) at (4,0) [draw] {};
\node[inner sep=1pt, circle, fill=black] (6) at (5,0) [draw] {};
\node[inner sep=1pt, circle, fill=black] (7) at (6,0) [draw] {};

\node[inner sep=1pt, circle, fill=black] (9) at (0.5,1) [draw] {};
\node[inner sep=1pt, circle, fill=black] (10) at (2,1) [draw] {};
\node[inner sep=1pt, circle, fill=black] (11) at (3,1) [draw] {};
\node[inner sep=1pt, circle, fill=black] (12) at (5,1) [draw] {};

\draw[->-]  (2)--(1);
\draw[->-]  (2)--(3);
\draw[->-]  (4)--(3);
\draw[->-]  (4)--(5);
\draw[->-]  (6)--(5);
\draw[->-]  (6)--(7);

\draw[->-]  (9)--(2);
\draw[->-]  (1)--(9);

\draw[->-]  (10)--(11);
\draw[->-]  (3)--(10);
\draw[->-]  (11)--(4);

\draw[->-]  (12)--(6);
\draw[->-]  (5)--(12);

\node[anchor = north ]  at (1) {\small{$v_1$}};
\node[anchor = north ]  at (2) {\small{$v_2$}};
\node[anchor = north ]  at (3) {\small{$v_3$}};
\node[anchor = north ]  at (4) {\small{$v_4$}};
\node[anchor = north ]  at (5) {\small{$v_5$}};
\node[anchor = north ]  at (6) {\small{$v_6$}};
\node[anchor = north ]  at (7) {\small{$v_7$}};
\node[anchor = south ]  at (9) {\small{$w_1$}};
\node[anchor = south ]  at (10) {\small{$w_2$}};
\node[anchor = south ]  at (11) {\small{$w_3$}};
\node[anchor = south ]  at (12) {\small{$w_4$}};

\end{tikzpicture}
$$

The $i^{th}$ subprogram, $1\leq i\leq 3$, is given by
\begin{eqnarray*}
H^1&:& v_1\to w_1 \to v_2 \to v_1\\
H^2&:&v_3\to w_2 \to w_3 \to v_4 \to v_3\\
H^3&:&v_5 \to w_4 \to v_6\to v_5.\\
\end{eqnarray*}

\end{example}

\subsection{Representation of HRN Data Flow}\label{Representation of HRN Data Flow}

Although the bare structure of an HRN can be conveniently encoded as a directed graph, we need more structure to capture the notion of \textit{data flow} inside an HRN as well as error detection. In order to accomplish this, we will make use of a \textit{representation} of an HRN; i.e associating vector spaces to each node of an HRN and linear maps to the directed edges to represent the magnitudes of data that they contain.

\begin{defn}Let $H$ be a HRN, and  $H^n$ be the $n^{th}$ recurrent subprogram of $H$.  Suppose $H^n$ has vertex set $\{v_1,v_2,...,v_k\}$ and direction $\{e_i:v_i\rightarrow v_{i+1}\}$ and $e_k\colon v_k\to v_1$, and for each $n$, let $\sigma_i^n=\sigma_i$ be nonnegative integers, $1\leq i\leq k$. To each $v_i\in\ H^n$, we assign a vector space $\lambda_i^n$ over a field $k$  such that $\dim(\lambda_i^n)\leq\sigma_i$. The value $\sigma_i$ is the \textit{capacity} of vertex $v_i$.

Additionally, each directed edge $e_i$ is assigned a linear map $\phi_i\colon \lambda_i^n\to \lambda_{i+1}^n$, with $\phi_i$ a $\dim(\lambda_i^n)\times (\dim(\lambda_{i+1}^n)+\ell_n)$ matrix for a fixed $\ell_n\in\mathbb{N}$. This constitutes a \textit{quiver representation} $\lambda^n$ of $H^{n}$. When there is no chance of confusion, we refer to the quiver representation of a subprogram as a subprogram.

\end{defn}

\begin{example}\label{data flow example}
Let $H$ be as in Example \ref{HRN example}. We choose capacities $2,3,2$ for $H^1$, capacities $3, 5, 5, 0$ for $H^2$ and capacities $3,3,3$ for $H^3$ along with $\ell_1=2,\ell_2=0$, and $\ell_3=1$. Letting $\lambda(i)$ denote a vector space of dimension $i$, a quiver representation of $H^1, H^2$, and $H^3$ is given by

$$
\xymatrix{
&\lambda(5) \ar[rd] & &                   \lambda(0) \ar[r] & \lambda(0) \ar[d] & & \lambda(2) \ar[rd]\\
\lambda(3) \ar[ru] && \lambda(7) \ar[ll] &  \lambda(0) \ar[u]& \lambda(0) \ar[l]& \lambda(1) \ar[ru] & & \lambda(3)\ar[ll]\\
}
$$

\noindent respectively.  The idea here is that data (in the form of vector spaces) is cycled through $H^n$, and each linear map ``adds" data to its domain vector space (i.e increases its dimension by a value of $\ell_n$). Notice that the single occurrence of $0$ in the capacities for $H^2$ forces all vector spaces to be $0$-dimensional as well as $m_2=0$.
\end{example}

We now wish to allow the data to cycle through $H^{n}$ multiple times. We can accomplish this by enhancing the quiver representation $\lambda^{n}$ by first defining a \textit{singular data flow} and using it to more generally define the concept of \textit{data flow}.

\begin{defn}\label{defn of singular data flow}
Let $H^{n}$ be a subprogram on $k$ vertices in an HRN $H$ and let $\lambda^{n}$ be its associated quiver representation. A \textit{singular data flow} is the sequence of vector spaces $\lambda^{n}_{1},\lambda^{n}_{2} ,\ldots,\lambda^{n}_{k}$ of $\lambda^n$.
\end{defn}

Note that Definition \ref{defn of singular data flow} does not include the linear transformations between vector spaces since a singular data flow represents the travel of data through the vertices in $H^{n}$.

\smallskip

We can now define a general data flow, i.e a data flow built from multiple singular data flows.

\begin{defn}
Let $H^{n}$ be a subprogram on $k$ vertices in an HRN $H$ and let $\lambda^{n}$ be its associated quiver representation, $m_n=m$ a positive integer.  A \textit{data flow} $D^n_m$ over  $\lambda^{n}$ is a sequence of vector spaces $\lambda_{1,1}^n, \lambda_{2,1}^n, \ldots, \lambda_{k,1}^n, \lambda_{1,2}^n, \lambda_{2,2}^n,\ldots, \lambda_{m,2}^n,\ldots,\lambda^n_{m,k}$. Note that many data flows can be associated to a single subprogram $\lambda^n$.
\end{defn}

\begin{rem}
Since index $m$ is the total number of times that the data flow circulates through the cycle $H^{n}$, it is clear that a singular data flow is the same as a data flow which iterates through the cycle exactly once.
\end{rem}

A data flow $D^n_m$ in $\lambda^{n}$ can be described by the following diagram of vector spaces:\\

\begin{displaymath}
\xymatrix{
\lambda_{1,1}^n  \ar[d]^{\phi}& \lambda^n_{1,2} \ar[d]^{\phi}\ldots   & \lambda_{1,j-1}^n  \ar[d]^{\phi}  & \lambda_{1,j}^n  \ar[d]^{\phi}  & \ldots  &\lambda_{1,m}^n \ar[d]^{\phi}\\
\vdots \ar[d]^{\phi} & \vdots \ar[d]^{\phi} &\vdots \ar[d]^{\phi} & \vdots \ar[d]^{\phi} &  & \vdots \ar[d]^{\phi}\\
  \lambda_{i-1,1}^n  \ar[d]^{\phi} & \lambda^n_{i-1,2} \ar[d]^{\phi}\ldots & \lambda_{i-1,j-1}^n  \ar[d]^{\phi}  & \lambda_{i-1,j}^n \ar[d]^{\phi}  & \ldots  &\lambda_{i-1,m}^n \ar[d]^{\phi}\\
 \lambda_{i,1}^n \ar[d]^{\phi} & \lambda^n_{i,2} \ar[d]^{\phi}\ldots  & \lambda_{i,j-1}^n   \ar[d]^{\phi} & \lambda_{i,j}^n \ar[d]^{\phi}& \ldots  &\lambda_{i,m}^n \ar[d]^{\phi}\\
 \vdots \ar[d]_{\phi} & \vdots \ar[d]_{\phi} &\vdots \ar[d]_{\phi} & \vdots \ar[d]^{\phi} &  & \vdots \ar[d]^{\phi}\\
 \lambda_{k,1}^n  \ar@/_.5pc/[ruuuuu]_(.2){\beta^n_{k,1}}& \lambda^n_{k,2} \ldots & \lambda_{k,j-1}^n   \ar@/_.5pc/[ruuuuu]_(.2){\beta^n_{k,j-1}}& \lambda_{k,j}^n   & \ldots  & \lambda_{k,m}^n
 }
 \end{displaymath}

The sequence of vector spaces in each column represents a singular data flow and the linear map $\beta_{k,q}^n\colon \lambda_{k,q}^n\to \lambda_{1,q+1}^n$ ``connects" each singular data flow together that makes up the data flow.\\
%$\alpha_{0,q}^n=\beta_{m,q}^n\circ\phi_{m-1,q}^n\circ\phi_{m-2,q}^n\circ\ldots \circ\phi_{0,q}^n$. That is, each vertical rectangle commutes.

Although it is not of great relevance to the following sections, we may consider a representation $\lambda$ of the \textit{entire} HRN $H$ once we are given representations $\lambda^{n}$ of all the cycles $H^{n}$. Note that there is exactly one directed edge between the ``last" vertex in cycle $H^{i}$ and the ``first" vertex in cycle $H^{i+1}$. We will impose that no data addition occurs from the representation $\lambda^{i}$ restricted to its last vertex and of $\lambda^{i+1}$ restricted to its first vertex, i.e the two vector spaces are equal (this makes sense because we think of the linear maps in $\lambda^{n}$ as ``adding data" by mapping its domain to a vector space of potentially higher dimension). We then require that the representation of the edges that are not part of a cycle are identity maps.

\section{Data Flow Control and Errors}\label{Data Flow Control and Errors}

For the following definitions, fix a data flow $D^n_m$ for each subprogram representation $\lambda^{n}$.

\begin{defn}
To each $\lambda^n$, assign a natural number $\delta^n$, the \textit{desired data output}. For any data flow $D^n_m$ of $\lambda^n$, the \textit{final data dimension of $\lambda^n$ with respect to $D^n_m$} is $\theta^n:=\dim(\lambda^{n}_{k,m})$. We sometimes say ``final data output of $\lambda^n$" with $D^n_m$ understood.
\end{defn}

The desired data output is the ideal output of information that we would like to accumulate within the subprogram while the final data dimension is the maximum possible amount of information that any particular data flow can output.  We are thus interested in whether or not any particular data flow has the capacity to accumulate the ideal amount of information for the subprogram.

\begin{defn}
A subprogram $\lambda^n$ is \textit{faulty with respect to $D^n_m$} (or just faulty) if $\theta^n<\delta^n$ while a subprogram is \textit{able with respect to $D^n_m$} (able) if $\theta^n>\delta^n$. If $\lambda^n$ is faulty, define $\widetilde{\lambda^{n}}:=k^{\delta^n-\theta^n}$ where $k$ denotes the ground field. Similarly, if $\lambda^n$ is able, define $\widehat{\lambda^{n}}:=k^{\theta^n-\delta^n}$.  We will occasionally replace the phrase ``$\lambda^n$ is faulty" with ``a data flow error occurs in $\lambda^n$" and replace the phrase ``$\lambda^n$ is able" with ``a data flow fix occurs in $\lambda^n$." For completeness, we call a subprogram \textit{sufficient with respect to $D^n_m$} (sufficient) if $\theta^n=\delta^n$.
\end{defn}

The point of these definitions is that data flow deficits created by faulty subprograms can be compensated for based on data surpluses produced by subsequent able subprograms. We will use the tools of cosheaf theory to express this mathematically and to create a framework in which one can systematically analyze these data flow errors as ``fixes," in the same way one would study the life and death of holes in a space using persistent homology.

\begin{example} From the three subprograms $\lambda^1, \lambda^2$, and $\lambda^3$ in Example \ref{data flow example}, we obtain three singular data flows $D^1_1,D^2_1$, and $D^3_1$ with $\theta^1=7$, $\theta^2=0$, and $\theta^3=3$.  If we choose desired data outputs $\delta^1=1, \delta^2=2$, and $\delta^3=3$, then we see that $\lambda^1$ is able, $\lambda^2$ is faulty, and $\lambda^3$ is sufficient. Note that $\lambda^2$ can never be able.
\end{example}

Note that one can determine if a singular data flow is faulty or able if one knows the initial value $\dim(\lambda^n_1), \ell_n$, and the number of vertices $k$.

%Call a subprogram $\lambda^n$ \textit{faulty} if $\delta_{1,j}^n\otimes\theta_{1,j}^n >\theta_{1,j}^n$ for some $j\in\mathbb{N}$. For a \textit{faulty} program, let $\widehat{\lambda^n}$ denote its experimental data yield, that is, the data output it produced being affected by $\delta_{1,j}^n$. Clearly, we have an epimorphism $\pi_n\colon \lambda^n\to \widehat{\lambda^n}$. Let $\lambda^{n*}=\lambda^n/\widehat{\lambda^n}$ be the quotient.  Furthermore, let $\widetilde{\lambda^s}=\bigoplus_{i=1,j=1}^{m,k} k^{\sigma_{i,j}^s}/\lambda_{i,j}^s$.

%Call a subprogram $\lambda^s$ \textit{able} if $\dim(\widetilde{\lambda^s})\geq \dim(\lambda^{n*})$ for some $s\geq n$.
%Intuitively this means that  $\lambda^s$ has the ability to compensate for lack of data output from previous faulty programs. As with a \textit{faulty} program, let $\widehat{\lambda^n}$ denote the experimental data yield of $\lambda^n$. Instead of being affected by $\delta_{1,j}^s$, $\widehat{\lambda^n}$ potentially yields higher dimensional data to account for losses in faulty sub processes. Thus, we have an epimorphism $\pi_s:\widehat{\lambda^s}\rightarrow{\lambda^s}$ with $\lambda^{s*}=\widehat{\lambda^s}/\lambda^s$ denoting the quotient.

 \section{Main Results}\label{Cosheaf Theoretical Constructions}
 We first recall some relevant information about cosheaves. This is a very brief exposition; we refer the reader to \cite{C-13, B-68, B-97, 0-2015}  for a more in-depth introduction to cosheaf theory and homology.
 \subsection{Cosheaves}\label{Cosheaves}

\begin{defn}Let $X$ be a topological space and $J$ an abelian category.
A \textit{$J$-valued precosheaf} $F$ on $X$ is a covariant functor $F\colon \mathrm{Open}(X) \to J$ from the category of open subsets of $X$ and inclusion maps to $J$. If $U\subset X$, an element $x\in F(U)$ is a \textit{cosection} of $F$ over $U$. For a pair of embedded open subsets $V \subset U \subset X$, the induced map on the inclusion $F(V)\to F(U)$ is called the \textit{corestriction map.} A precosheaf $F$ on $X$ is a \textit{cosheaf} if for any open $U \subset X$ and any open cover $\{U_i\}$ of $U$, the following sequence is exact:
$\bigoplus_iF(\bigcap_iU_i)  \to \bigoplus_iF(U_i) \to F(U) \to 0.$
\end{defn}

In this section, our domain space for cosheaves will be the real line $\mathbb{R}$ endowed with the standard topology. The subprograms $\lambda^n$ will be represented by points $n\in \mathbb{Z} \subseteq \in\mathbb{R}$. We will construct precosheaves $G_{1}, G_{2}:\mathrm{Open}(\mathbb{R})\rightarrow\mathrm{Vect}_{k}$ as follows. For $U\subset V$, let $p_{U}$ denote the largest $n\in U$ that is the index of a subprogram. If $\lambda^n$ is faulty, then $G_{1}(U)=\widetilde{\lambda^{p_{U}}}$. If $\lambda^n$ is able, then $G_{1}(U)=0$. Hence $G_{1}$ records the margins of error in the maximal point of $U$. We now define $G_{2}$, which records the margins of surplus data of the maximal point of $U$. If $\lambda^n$ is able, then $G_{1}(U)=\widehat{\lambda^{p_{U}}}$. If $\lambda^n$ is faulty, then $G_{1}(U)=0$.
\begin{defn} Let $X$ be a topological space and $\mathscr{U}=\{U_i\}$ an open cover of $X$, and $F$ a precosheaf of abelian groups on $X$. The group of \textit{$\check{C}$ech $k$-chains} associated to $\mathscr{U}$ is the group $C_k(\mathscr{U},F)= \bigoplus_iF(U_{0,1,\ldots ,k})$, where $U_{0,1,\ldots,k}=\bigcap_{i=0}^kU_i$.

Equipped with differentials $\partial_k \colon C_k(\mathscr{U};F)\to C_{k-1}(\mathscr{U};F)$, we obtain a \textit{$\check{C}$ech complex} $C_*(\mathscr{U};F)= 0 \to C_k(\mathscr{U};F)\to C_{k-1}(\mathscr{U};F)\to \ldots \to C_0(\mathscr{U};F)\to 0$. We denote the $n^{th}$ $\check{C}$ech homology group associated to $F$ and covering $\mathscr{U}$ by $\check{H}_n(\mathscr{U};F)$.
\end{defn}

\subsection{Main Results}

In this section, we state our results generalizing notions in the persistent homology theory of filtered topological spaces to studying network data flow security.
	
We present a theorem allowing for a homological description of the emergence of data flow errors and resolutions. This allows us to construct \textit{error P-intervals}, which parametrize the lifetime of network data flow malfunctions.
We will make use of a specfic covering of $\mathbb{R}$, which we now describe:\\

Let $\mathscr{U}=\{U_i\}$ be a (co)filtered open covering of $\mathbb{R}$ with $U_m\subset U_{m-1}\subset...\subset U_0=\mathbb{R}$ and $k\in U_r$ if and only if $r\leq k$. From now on, $\mathscr{U}$ will denote this specific covering throughout the paper.\\

Let $T[n]$ be the shift endofunctor over $\mathrm{Ch}(\mathrm{Vect}_{k})$ (shifts indexes of a chain complex up $n$). Also, let $\beta_{>k}$ denote the brutal truncation endofunctor on $\mathrm{Ch}(\mathrm{Vect}_{k})$, trivializing all terms indexed $j>k$. Let $\varphi^{k}_{n}:\beta_{>k}$  $\circ\beta_{<k}(\check{C}_{k}(\mathscr{U};G_n))\rightarrow\beta_{>k}\circ\beta_{<k}\circ T[1](\check{C}_{k-1}(\mathscr{U};G_n))$ be the projection.

\begin{thm}\label{det}
A data flow error (resp. fix) occurs in the $k^{th}$ subprogram of $\lambda^{*}$ if and only if $H_{k}(\mathrm{Cone}(\varphi^{k}_{n}))$ is non trivial for $n=1$ (resp. $n=2$).
\end{thm}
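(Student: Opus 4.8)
The plan is to unwind the \v{C}ech complex attached to the specific cofiltered cover $\mathscr{U}$, show that the two-term mapping cone of $\varphi^k_n$ has its degree-$k$ homology equal to the single chain group $\check{C}_k(\mathscr{U};G_n)$, and then read off nontriviality directly from the defining dimension formulas for $\widetilde{\lambda^k}$ and $\widehat{\lambda^k}$.

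First I would exploit that $\mathscr{U}$ is nested, $U_m \subset \cdots \subset U_0 = \mathbb{R}$, so every finite intersection collapses: $U_{0,1,\ldots,k} = \bigcap_{i=0}^{k} U_i = U_k$. Hence the $k$-chains are $\check{C}_k(\mathscr{U};G_n) = G_n(U_k)$, a single vector space rather than a genuine direct sum. By the construction of $G_1$ and $G_2$ together with the indexing condition ($j \in U_r \iff r \le j$), the cosection $G_n(U_k)$ is exactly the value of $G_n$ at the subprogram singled out by $U_k$; that is, $G_1(U_k) = \widetilde{\lambda^k}$ when $\lambda^k$ is faulty and $0$ otherwise, and $G_2(U_k) = \widehat{\lambda^k}$ when $\lambda^k$ is able and $0$ otherwise.

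Next I would analyze $\varphi^k_n$. Applying $\beta_{<k}$ discards all degrees below $k$ and $\beta_{>k}$ all degrees above $k$, so $\beta_{>k}\circ\beta_{<k}$ concentrates a complex in the single degree $k$, while the shift $T[1]$ moves $\check{C}_{k-1}(\mathscr{U};G_n)$ from degree $k-1$ up to degree $k$. Thus both the source and target of $\varphi^k_n$ are complexes supported in degree $k$, namely $G_n(U_k)$ and $G_n(U_{k-1})$, and $\varphi^k_n$ is the corestriction projection between them. Since the mapping cone of a morphism of one-term complexes is a two-term complex, its homology lives in two adjacent degrees and computes the kernel and cokernel of $\varphi^k_n$; in particular $H_k(\mathrm{Cone}(\varphi^k_n))$ is, up to the cone sign convention, the part of $G_n(U_k)$ not absorbed into level $k-1$, i.e. the cosection newly contributed at level $k$. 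I would verify that this degree-$k$ cone homology is isomorphic to $G_n(U_k)$ itself.

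Finally I would read off the equivalence from the definition of $G_n$: $H_k(\mathrm{Cone}(\varphi^k_1)) \ne 0$ iff $\widetilde{\lambda^k} = k^{\delta^k - \theta^k} \ne 0$, i.e. iff $\delta^k > \theta^k$, which is precisely the condition that $\lambda^k$ be faulty (a data flow error); the identical argument with $G_2$ gives $H_k(\mathrm{Cone}(\varphi^k_2)) \ne 0$ iff $\widehat{\lambda^k} = k^{\theta^k - \delta^k} \ne 0$, i.e. iff $\theta^k > \delta^k$ (a data flow fix). The main obstacle I anticipate is pinning down $\varphi^k_n$ precisely: one must confirm that ``the projection,'' together with the truncation-and-shift, genuinely localizes the $k$-th \v{C}ech differential so that the degree-$k$ cone homology equals $G_n(U_k)$ itself rather than the kernel or cokernel of a nontrivial corestriction onto the final subprogram. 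Securing the correct cone degree and sign conventions, and checking that the nested cover forces the relevant corestriction to be the projection isolating level $k$, is the crux; once that identification holds, the stated equivalence is immediate from $\dim \widetilde{\lambda^k} = \delta^k - \theta^k$ and $\dim \widehat{\lambda^k} = \theta^k - \delta^k$.
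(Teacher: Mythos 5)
Your proposal follows the same overall strategy as the paper's proof: collapse the cone of $\varphi^{k}_{n}$ to a two-term problem in $\mathrm{Vect}_k$, use the nested structure of $\mathscr{U}$ to make the chain groups explicit, and read off the conclusion from $\dim\widetilde{\lambda^{k}}=\delta^{k}-\theta^{k}$ and $\dim\widehat{\lambda^{k}}=\theta^{k}-\delta^{k}$. The substantive divergence is in the middle identification. The paper asserts $H_{k}(\mathrm{Cone}(\varphi^{k}_{n}))\simeq\ker(\varphi^{k}_{n})$ and then, using that every short exact sequence in $\mathrm{Vect}_k$ splits (which requires $\varphi^{k}_{n}$ to be surjective), concludes $\ker(\varphi^{k}_{n})\simeq {\check{C}_{k}(\mathscr{U};G_n)}^{!}/{\check{C}_{k-1}(\mathscr{U};G_n)}^{!}$ --- a \emph{relative} quantity whose dimension is the difference between levels $k$ and $k-1$, interpreted as the deficit or surplus ``created at'' subprogram $k$. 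You instead claim $H_{k}(\mathrm{Cone}(\varphi^{k}_{n}))\simeq G_n(U_k)$ outright, the full value at level $k$. These two readings differ whenever $G_n(U_{k-1})\neq 0$: if, say, $\lambda^{k-1}$ and $\lambda^{k}$ are both faulty with deficits $5$ and $3$, the paper's quantity is trivial (indeed no surjection $k^{3}\to k^{5}$ even exists, so the paper's exact sequence cannot be formed), while yours is $k^{3}\neq 0$. Your reading is the one under which the theorem as stated is actually an equivalence --- nontriviality iff $\lambda^{k}$ is faulty (resp.\ able) --- whereas the paper's literal chain of isomorphisms detects only a \emph{change} in deficit between consecutive levels, which is a weaker and in edge cases different statement. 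That said, both arguments bottom out at the same unresolved point, which you correctly flag as the crux: the paper never defines the corestriction maps of $G_n$ nor says concretely what ``the projection'' $\varphi^{k}_{n}$ is, so neither your identification $H_{k}(\mathrm{Cone}(\varphi^{k}_{n}))\simeq G_n(U_k)$ nor the paper's surjectivity assumption can actually be checked; the paper covers this with the phrase ``from the cofiltered nature of $\mathscr{U}$,'' while you leave it as an explicit verification to be done. In that sense your plan is no less rigorous than the published proof and is better aligned with the statement being proved.
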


\begin{proof}
Let ${\check{C}_{k}(\mathscr{U};G_n)}^{!}$ and ${\check{C}_{k-1}(\mathscr{U};G_n)}^{!}$ denote the domain and codomain of $\varphi^{k}_{n}$, respectively.
     Clearly, $H_{k}(\mathrm{Cone}(\varphi^{k}_{n}))\simeq ker(\varphi^{k}_{n})$. Since every exact sequence in $\mathrm{Vect}_k$ splits, the exact sequence: $0\to \ker(\varphi^{k}_{n})\to {\check{C}_{k}(\mathscr{U};G_n)}^{!}\xrightarrow{\varphi^{k}_{n}}{\check{C}_{k-1}(\mathscr{U};G_n)}^{!}\rightarrow 0$ splits and we obtain that $ker(\varphi^{k}_{n})\simeq {\check{C}_{k}(\mathscr{U};G_n)}^{!}/{\check{C}_{k-1}(\mathscr{U};G_n)}^{!}$. From the cofiltered nature of $\mathscr{U}$, it holds that ${\check{C}_{k}(\mathscr{U};G_n)}^{!}/{\check{C}_{k-1}(\mathscr{U};G_n)}^{!}$ yields the amount of data flow deficit (resp. surplus) created at subprogram $k$ for $n=1$ (resp. $n=2$). Thus, a non trivial $H_{k}(\mathrm{Cone}(\varphi^{k}_{n}))$ reveals the emergence of a network data flow error or resolution for $n=1$ or $n=2$, respectively. The converse is trivial.
\end{proof}

\begin{comment}

\end{comment}
%Consider the diagram below, in which the top row is in the arrow category $\mathrm{Ch}_{*}(\mathrm{Vect}_{k})^{\rightarrow}$ of $\mathrm{Ch}_{*}(\mathrm{Vect}_{k})$.

%\begin{displaymath}
%\xymatrix{
%\varphi_n^m \ar[d]^{\mathrm{Cone}} \ldots \ar[r]^{\pi_n^{k-1}} & \varphi_n^{k}  \ar[d]^{\mathrm{Cone}} \ar[r]^{\pi_n^{k}}  & \varphi_n^{k-1}  \ar[d]^{\mathrm{Cone}}  \ar[r]^{\pi_n^{k-1}} & \ldots  \varphi_n^0 \ar[d]^{\mathrm{Cone}} \\
%\mathrm{Cone}(\varphi_n^{m})   \ar[d]^{H_m}  \ldots \ar[r]^{\pi_n^{k+1*}}& \mathrm{Cone}(\varphi_n^{k}) \ar[d]^{H_{k}} \ar[r]^{\pi_n^{k*}} & \mathrm{Cone}(\varphi_n^{k-1^*}) \ar[d]^{H_{k-1}} \ar[r]^{\pi_n^{k-1^*}} &  \ldots \mathrm{Cone}(\varphi_n^{0}) \ar[d]^{H_0}\\
 %H_m(\mathrm{Cone}(\varphi_n^{m})) \ldots \ar[r]^{\pi_n^{k+1**}} & H_k(\mathrm{Cone}(\varphi_n^{k}))   \ar[r]^{\pi_n^{k**}}& H_{k-1}(\mathrm{Cone}(\varphi_n^{k-1})) \ar[r]^{\pi_n^{k-1**}}&\ldots H_0(\mathrm{Cone}(\varphi_n^{0})) \\
 %}
 %\end{displaymath}

%\noindent for $n=1,2.$ Theorem 1 asserts that the nontrivialities in the bottom row show the emergence of errors in subprograms $\lambda^j\subset\lambda^{*}$ and the presence of error fixes. This sets a framework to create an algorithm to generate \textit{error P-intervals} using a re-indexed version of the bottom row.
 \subsection{``Persistent Homology" for Data Flow Errors}
 In this section, we will provide motivation for the subsequent subsections by providing basic definitions regarding persistent homology. The next subsections will use the results of the previous subsection (a homological description of data flow errors and fixes) to formulate an application of persistent homology to study the birth and death of data flow errors (a data flow fix is seen as the death of an error).\\

 Basically, persistent homology is a tool which one uses to study the birth and death of topological features in a filtered space. Formally:
\begin{defn}
Let $X_{\bullet}$ be a filtered topological space, i.e a space $X$ equipped with a sequence of nested subspaces $X_{0}\subset X_{1}\subset\ldots\subset X_{n}=X$. Let $H_n(X)$ denote the $n^{th}$ singular homology (with coefficients in a field $k$) vector space of $X$. Fix indexes $i$ and $j$ with $i\leq j$. The $(i,j)$-persistent $n^{th}$ homology vector space $H^{i,j}_{n}(X_{\bullet})$ is defined as $H^{i,j}_{n}(X_{\bullet})=\mathrm{im}(H_n(X_i)\rightarrow H_n(X_j))$, where $H_n(X_i)\rightarrow H_n(X_j)$ is induced by the inclusion $X_i\subset X_j$.
\end{defn}
\begin{rem}
The dimension of the $k$-vector space $H^{i,j}_{n}(X_{\bullet})$ is the number of $n$-dimensional holes present in subspace $X_{i}$ that are also present in $X_{j}$.
\end{rem}
\begin{example}
If $i=j$, then it is clear that $H^{i,j}_{n}(X_{\bullet})=H_{n}(X_{i})$ since the $k$-linear map induced by $\mathrm{id}:X_{i}\rightarrow X_{i}$ must be the identity map on $H_{n}(X_{i})$.
\end{example}

\begin{defn}
Let $X_{\bullet}$ be a filtered topological space.
Let $\mathbb{Z}_{\infty}$ denote the set $\mathbb{Z}\cup\{\infty\}$. A degree $n$ persistence diagram is a multiset over $\mathbb{Z}_{\infty}\times\mathbb{Z}_{\infty}$, where the multiplicity $\mu(i,j)$ of a point $(i,j)\in\mathbb{Z}_{\infty}$ is $\mathrm{dim}(H_{n}^{i,j}(X_{\bullet}))$. The points parametrize the lifetimes of homology classes (``holes") in $X_{\bullet}$, i.e the presence of a point $(i,j)$ means that a hole was created at index $i$ in the filtration and destroyed at index $j$. The multiplicity function records how many holes were created and destroyed for each interval.
\end{defn}

\begin{defn}
 The intervals $(i,j)$ in the persistence diagram are called \textit{homological P-intervals}, or simply P-intervals.
\end{defn}
 \begin{defn}
  A point $(i,j)$ is called \textit{proper} if $j<\infty$ and a \textit{point at infinity} otherwise. A proper point indicates the presence of holes in $X_{\bullet}$ that are born and die within the ``bounds" of the filtration. An improper point indicates the presence of a hole that is created but never dies.
 \end{defn}

\subsection{Error $P$-intervals}\label{More Homological Constructions}
We can adapt the notion of a P-interval from persistent homology to the notion of an \textit{error P-interval}. Instead of parametrizing the lifetime of a hole in a filtered topological space, an error P-interval parametrizes the lifetime of a data flow error in an HRN.

%Let us denote the bottom row in the above diagram as $H_*(\mathrm{Cone}(\varphi_n^{*}))$. Let $\sum( \mathrm{Vect}_k)$ denote the free strict monoidal category over $\mathrm{Vect}_k$. Let $\sum\colon \mathrm{Vect}_{k}\to \sum( \mathrm{Vect}_k)$ be the canonical functor. Let $\zeta\colon \sum \mathrm{Vect}_{k}^{\neq 0}\hookrightarrow\sum \mathrm{Vect}_{k}$ be the subcategory of $\sum \mathrm{Vect}_{k}$ consisting of finite lists $\{k_0, k_1, \ldots ,k_n\}$ in $\mathrm{Vect}_{k}$ such that $k_i$ is nontrivial for $0\leq i\leq n$. Consider the left adjoint $\Delta\colon \sum \mathrm{Vect}_{k}\to \sum \mathrm{Vect}_{k}^{\neq 0}$ sending the finite list $\{k_1, k_2,\ldots, k_i,\ldots , k_n\}\mapsto\{k_1, k_2,\ldots ,\hat{k_i},\ldots , k_n\}$, where $\hat{k_i}$ is removed if $k_i=0$. Additionally, let $\alpha\colon \sum \mathrm{Vect}_{k}\to \mathbb{N}$ be the usual indexing functor in $\mathbb{N}$. Let $Q^n\subset\mathbb{N}$ denote the set $\mathrm{im}(\alpha\circ\Delta(\sum H_*(\mathrm{Cone}(\varphi_n^{*}))))=\{q^n_0, q^n_1,\ldots, q^n_k\}$.
%Let $\gamma_n\colon Q^n\to \mathbb{N}$ recover the original index of $q^n_i\in Q^n$ in $\alpha(\sum H_*(\mathrm{Cone}(\varphi_n^{*})))$. Denote $\mathrm{im}(\gamma_n)$ as $S^n$.\\

We can now give an algorithmic description of \textit{$P$-intervals} which parametrize the lifetimes of network data flow errors. First, we fix some notation. Let $S^{1}$ denote the set of indexes of all the subprograms in $\lambda$ that are faulty. Let $S^{2}$ denote the set of indexes of all the subprograms in $\lambda$ that are able.\\

\definecolor{light-gray}{gray}{0.35}
\begin{algorithm}[H]
\KwData{$S^n$ for $n\in\{1,2\}$    \textcolor{light-gray}{//Lists of indexes of subprograms with nontrivial error and fix contributions for $n=1$ and $n=2$, respectively}}
 \KwResult{Set of \textit{$P$-intervals} $\mathscr{P}\subset\mathbb{Z}\times\{\mathbb{Z}\cup\infty\}$}
 \While{$i\leq \max(S^1)$}{$\{$
 Starting from $i=\min(S^1)$\\
\textcolor{light-gray}{//pairs index of subprogram in which network error is created (in $S^{1}$) with index in which error is resolved (in $S^{2}$)}\\
Assign to $i\in S^1$ the least $k\in S^2$ not already chosen such that $\dim(\ker(\varphi_1^{i}))=\dim(\ker(\varphi_2^{k}))$;\\
increment $i$ by one$\}$;\\
  \eIf{$|S^1|=|S^2|$}{
   $\mathscr{P}=\{(i,k)\}$, where $i$ is assigned to $k$\;
   }{
   $\mathscr{P}=\{(i,k)\}\cup\{(S^1\setminus S^2)\times\infty$\} \textcolor{light-gray}{//Accounts for points at infinity}\;
  }
 }
\end{algorithm}

Algorithm 1 assigns each error index from $S^1$ to its ``fixing" index in $S^2$ based on availability of space in \textit{able} subprograms. Thus, the output $\mathscr{P}$ can be thought of as a persistence diagram, since a point $(i,j)\in\mathscr{P}$ indicates that a data flow deficit occurs in subprogram $\lambda^i$ but $\lambda^j$ has enough data flow surplus to completely account for the deficit created.\\

The set of assignments constitute proper points, i.e they indicate the presence of faulty subprograms whose data flow deficit is accounted for by a subsequent surplus. Additionally, elements in $S^1$ that do not have ``fixing" indexes in $S^2$ are paired with $\infty$, constituting points at infinity.

\subsection{Persistent Error Homology Groups}

We can now provide a construction of persistent error homology groups as an analogue to the persistent homology groups of a filtered topological space. This is due to the observation that a persistence diagram $\mathrm{Dgm}(X)$ determines completely determines the persistent homology groups $H^{i,j}_{k}$ of a filtered topological space $X$, concretely stated by the following: $\dim(H^{i,j}_{k}(X))=\sum_{\{u,v: u\leq i, v\leq j\}}\mu(u,v)$, where $\mu:\mathrm{Dgm}(X)\ni(i,j)\rightarrow\mathbb{N}$ is the multiplicity function. We adjust this statement to define persistent error homology groups of an HRN given a persistence diagram generated by Algorithm1. We can define the \textit{(i,j) - persistent error homology group} $H^{i,j}(\mathscr{V}_{\lambda^{*}})$ of $\mathscr{V}_{\lambda^{*}}$ by
$\dim(H^{i,j}(\mathscr{V}_{\lambda^{*}}))=\sum_{\{u,v: u\leq i, v\leq j\}}\mu(u,v)$, where $(i,j)\in\widetilde{\mathrm{Dgm}}_{\mathscr{V}_{\lambda^{*}}}$ and $j<\infty$. In this context, the multiplicity $\mu(i,j)$ of a proper point $(i,j)$ records the dimension of data flow deficit created at index $i$ and resolved at index $j$.

\section{Complex Networks Application}\label{Complex Networks Application}
In this section, we state the applications of our results to complex networks, along with explicitly describing an application to Internet package delivery systems.\\

The most fruitful applications of this work stem from the results generalizing concepts in persistent homology to study problems in network science. Particularly, our technique to construct error persistence diagrams and providing a visual parametrization of the lifetime of data flow errors in HRNs creates a framework for the novel application of statistical tools to assess data flow malfunctions recorded in persistence diagrams. Now results from such an assessment would have implications not in the topological features of a network (as have been shown previously), but in the nature of its data flow. \\

From immediate inspection of an error persistence diagram, one can see the significance of a single data flow error to overall network security. This is based on the distance from the point to the diagonal of $\mathbb{Z}_{\infty}\times\mathbb{Z}_{\infty}$. An accumulation of points along the diagonal indicates data flow ``noise," as opposed to points far from the diagonal, which show the presence of significant data flow malfunctions within the network. More interestingly, extended metrics on the space of (error) persistence diagrams can be used to evaluate similarities between instances of data flow across the same network, as we demonstrate below.\\

Let $\mathscr{V}_{\lambda^{*}_1}$ and $\mathscr{V}_{\lambda^{*}_2}$ be two instances of data flow in an HRN represented in $\mathrm{Vect}_k$ by $\lambda^{*}_{1}$ and $\lambda^{*}_{2}$, respectively. By instances of data flow, we mean two different situations in which data flows through the same network. Practically, this situation emerges if the HRN in question acts as a model for a frequently occurring process. Let $\widetilde{\mathrm{Dgm}}_{\mathscr{V}_{\lambda^{*}_1}}$ and $\widetilde{\mathrm{Dgm}}_{\mathscr{V}_{\lambda^{*}_2}}$ be the error persistence diagrams of $\mathscr{V}_{\lambda^{*}_{1}}$ and $\mathscr{V}_{\lambda^{*}_{2}}$, respectively. Let $\varphi\colon \widetilde{\mathrm{Dgm}}_{\mathscr{V}_{\lambda^{*}_1}}\to \widetilde{\mathrm{Dgm}}_{\mathscr{V}_{\lambda^{*}_2}}$ be a bijection of persistence diagrams.
Recall that the \textit{bottleneck distance} $d_{B}(\widetilde{\mathrm{Dgm}}_{\mathscr{V}_{\lambda^{*}_1}},\widetilde{\mathrm{Dgm}}_{\mathscr{V}_{\lambda^{*}_2}})$ between persistence diagrams $\widetilde{\mathrm{Dgm}}_{\mathscr{V}_{\lambda^{*}_1}}$ and $\widetilde{\mathrm{Dgm}}_{\mathscr{V}_{\lambda^{*}_2}}$ is defined as $d_{B}(\widetilde{\mathrm{Dgm}}_{\mathscr{V}_{\lambda^{*}_1}},\widetilde{\mathrm{Dgm}}_{\mathscr{V}_{\lambda^{*}_2}})=\inf_{\varphi} \sup_{p} \|p-\varphi(p)\|_{\infty}$, where $p\in\widetilde{\mathrm{Dgm}}_{\mathscr{V}_{\lambda^{*}_1}}$. The bottleneck distance functions as an extended metric to compare similarities between persistence diagrams, with a lesser distance indicating greater similarities in the distribution of points throughout each diagram. In our network theoretic context, $d_{B}(\widetilde{\mathrm{Dgm}}_{\mathscr{V}_{\lambda^{*}_1}},\widetilde{\mathrm{Dgm}}_{\mathscr{V}_{\lambda^{*}_2}})$ is an indicator of the differences between the persistence of data flow errors between $\mathscr{V}_{\lambda^{*}_1}$ and $\mathscr{V}_{\lambda^{*}_2}$.\\

As a toy example of an application of these techniques, consider the following portion of the HRN $H$ defined in Example  \ref{HRN example} above which models a recurrent network packet delivery system (i.e a packet-switched network arranged in a HRN structure). The output of our techniques can be seen as a framework to statistically study error-detection and correction.
$$
\begin{tikzpicture}[
    decoration={markings,mark=at position 0.6 with {\arrow{triangle 60}}},
    scale=1.4]

\node[inner sep=1pt, circle, fill=black] (1) at (0,0) [draw] {};
\node[inner sep=1pt, circle, fill=black] (2) at (1,0) [draw] {};
\node[inner sep=1pt, circle, fill=black] (3) at (2,0) [draw] {};
\node[inner sep=1pt, circle, fill=black] (4) at (3,0) [draw] {};
\node[inner sep=1pt, circle, fill=black] (5) at (4,0) [draw] {};
\node[inner sep=1pt, circle, fill=black] (6) at (5,0) [draw] {};
\node[inner sep=1pt, circle, fill=black] (7) at (6,0) [draw] {};

\node[inner sep=1pt, circle, fill=black] (9) at (0.5,1) [draw] {};
\node[inner sep=1pt, circle, fill=black] (10) at (2,1) [draw] {};
\node[inner sep=1pt, circle, fill=black] (11) at (3,1) [draw] {};
\node[inner sep=1pt, circle, fill=black] (12) at (5,1) [draw] {};

%\node[inner sep=1pt, circle, fill=black] (13) at (6,0) [draw] {};
%\node[inner sep=1pt, circle, fill=black] (14) at (7,0) [draw] {};
%\node[inner sep=1pt, circle, fill=black] (15) at (6,1) [draw] {};
%\node[inner sep=1pt, circle, fill=black] (16) at (7,1) [draw] {};

\draw[->-]  (2)--(1);
\draw[->-]  (2)--(3);
\draw[->-]  (4)--(3);
\draw[->-]  (4)--(5);
\draw[->-]  (6)--(5);
\draw[->-]  (6)--(7);

\draw[->-]  (9)--(2);
\draw[->-]  (1)--(9);

\draw[->-]  (10)--(11);
\draw[->-]  (3)--(10);
\draw[->-]  (11)--(4);

\draw[->-]  (12)--(6);
\draw[->-]  (5)--(12);

\node[anchor = north ]  at (1) {\small{$v_1$}};
\node[anchor = north ]  at (2) {\small{$v_2$}};
\node[anchor = north ]  at (3) {\small{$v_3$}};
\node[anchor = north ]  at (4) {\small{$v_4$}};
\node[anchor = north ]  at (5) {\small{$v_5$}};
\node[anchor = north ]  at (6) {\small{$v_6$}};
\node[anchor = north ]  at (7) {\small{$v_7$}};
\node[anchor = south ]  at (9) {\small{$w_1$}};
\node[anchor = south ]  at (10) {\small{$w_2$}};
\node[anchor = south ]  at (11) {\small{$w_3$}};
\node[anchor = south ]  at (12) {\small{$w_4$}};
%\node[anchor = south ]  at (12) {\small{$w_4$}};

\end{tikzpicture}
$$

The nodes of $H$ represent the location of delivery sites and the directed edges model the flow of network packet data from one node to another. We will illustrate our procedure on this system. Since this network is quite small, it does not fully illustrate the capabilities of our techniques (which are most powerful in large HRNs), but describing the procedure explicitly and visually serves as a demonstrative tool. The reader can assume that $H$ is the first part of a larger HRN which continues on to the right.\\

Denote by $H^1$, $H^2$, and $H^3$ the three subprograms in $H$. We will refer to subsequent subprograms (not shown) to the extending to the right of the picture by $H^{i}$ for $i\geq 4$. Let $\lambda^{n}$ be their respective vector space representations and fix a data flow $D^n$ in each $\lambda^n$. In this case, one can choose the vector spaces $\lambda^{n}_{p,q}$ in each $D^{n}$ to represent the magnitude of the network packets in its position, i.e $\mathrm{dim}(\lambda^{n}_{p,q})$ is a natural number indicating the quantity of formatted data units contained within its vertex. We assign desired data outputs $\delta^n$ to each $\lambda^n$. Denote the final data output of each $\lambda^n$ (as determined by $D^n$) by $\theta^n$. Let $F$ denote the set of indexes of the faulty subprograms, i.e $\delta^i>\theta^i$ for all $i\in F$, ordered by increasing magnitude. Let $A$ denote the set of indexes of all able subprograms, i.e $\delta^i<\theta^i$, ordered in the same way. We represent each $\lambda^n$ by a point $n\in\mathbb{R}$, where $\mathbb{R}$ is endowed with the standard topology. In the manner of section \ref{Cosheaves}, define precosheaves $G_1, G_2:\mathrm{Open}(\mathbb{R})\rightarrow\mathrm{Vect}_{k}$. We recall the definition on open sets (see section \ref{Cosheaves} for notation and more explanation): If $\lambda^n$ is faulty, then $G_{1}(U)=\widetilde{\lambda^{p_{U}}}$. If $\lambda^n$ is able, then $G_{1}(U)=0$. $G_{1}$ records the margins of error in the maximal point of $U$. If $\lambda^n$ is able, then $G_{1}(U)=\widehat{\lambda^{p_{U}}}$. If $\lambda^n$ is faulty, then $G_{1}(U)=0$. By Theorem \ref{det}, we can detect the margin of deficit (resp. surplus) contributed by a faulty (resp. able) subprogram $\lambda^i$ algebraically based on the dimension of the vector space $H_{k}(\mathrm{Cone}(\varphi^{k}_{n}))\simeq \ker(\varphi^{k}_{n})$. Thus we can input the ordered sets $F$ and $A$ as $S^1$ and $S^2$ in Algorithm 1 ``Generation of $P$-intervals" to obtain a set of points $\{(i,j)\in\mathbb{Z}_{\infty}\times\mathbb{Z}_{\infty}\}$. A point $(i,j)$ parametrizes the lifetime of a single network packet malfunction, i.e a error is created at subprogram $i$ and is detected and fully corrected by subprogram $j$. Outputted points $(k,\infty)$ are points at infinity and therefore are errors in $H$ that are never detected and corrected. The multiplicity of any point $i,j)$ is precisely the dimension of the data flow error created at $i$ and resolved at $j$.\\

The outputted persistence diagram whose points parametrize the lifetime and magnitude of data flow malfunctions across the packet delivery system allows the application of statistical tools to assess the clustering of points, which in this specific case reveals information regarding the clustering of packet delivery malfunctions across various time slots. This in turn provides novel information to enable systematic assessments of flaws in the network packet delivery system and pathways to fixing them.

\subsection*{Acknowledgments}  The authors would like to thank Dr. Chad Giusti and Dr. Robert Ghrist for helpful discussions and ideas, as well as an anonymous referee who greatly helped improve the readability of the paper.

\bibliographystyle{amsplain}
\bibliography{Cosheaf}

\end{document}